\newtheorem{theorem}{Theorem}
\newtheorem{proposition}[theorem]{Proposition}
\newtheorem{corollary}[theorem]{Corollary}
\newtheorem*{claim}{Claim}
\newtheorem{lemma}[theorem]{Lemma}
\numberwithin{theorem}{section}
\numberwithin{equation}{section}
\DeclareFontFamily{OT1}{rsfs}{} \DeclareFontShape{OT1}{rsfs}{m}{n}{
<-7> rsfs5 <7-10> rsfs7 <10-> rsfs10}{}
\DeclareMathAlphabet{\mycal}{OT1}{rsfs}{m}{n}
\begin{document}

\title{A sharp inscribed radius estimate for fully nonlinear flows}
\author{Simon Brendle and Pei-Ken Hung}
\address{Department of Mathematics \\ Columbia University \\ New York, NY 10027}
\address{Department of Mathematics \\ Columbia University \\ New York, NY 10027}
\thanks{The first author was supported in part by the National Science Foundation under grant DMS-1505724.}

\begin{abstract}
We prove a sharp estimate for the inscribed radius under certain fully nonlinear curvature flows. This estimate is asymptotically sharp on cylinders.
\end{abstract}
\maketitle

\section{Introduction}

Given a hypersurface in Euclidean space and a point $p$ on that hypersurface, the inscribed radius at $p$ is defined to be the radius of the largest ball that lies inside the hypersurface and touches the hypersurface at $p$. It follows from deep results of Brian White \cite{White1},\cite{White2} that, for an embedded mean convex solution of mean curvature flow, the inscribed radius is bounded from below by $\frac{\alpha}{H}$, where $\alpha>0$ is a uniform constant that depends only on the initial data. An alternative proof of that fact was given by Sheng and Wang \cite{Sheng-Wang}. In that paper, Sheng and Wang also introduced the notions of noncollapsing and inscribed balls. Later, Andrews \cite{Andrews} gave another proof of the inscribed radius estimate, which relies on a two-point maximum principle. This technique was pioneered in Huisken's work on the curve shortening flow \cite{Huisken} (see also \cite{Hamilton2}); a survey can be found in \cite{Brendle-survey}. An interesting feature of the argument in \cite{Andrews} is that it can be extended to certain fully nonlinear curvature flows; this was done in \cite{Andrews-Langford-McCoy}. In \cite{Brendle1}, the first author obtained a sharp estimate for the inscribed radius for embedded mean convex solutions of mean curvature flow. More precisely, given any $\delta>0$, it turns out that the inscribed radius is bounded from below by $\frac{1}{(1+\delta)H}$ at each point where the curvature is sufficiently large. An alternative proof was subsequently given by Haslhofer and Kleiner \cite{Haslhofer-Kleiner}.

In this paper, we extend the estimate in \cite{Brendle1} to certain fully nonlinear flows. Throughout this paper, we fix a constant $\kappa \geq 0$. We consider a family hypersurfaces in a Riemannian manifold $X$ which are $\kappa$-two-convex in the sense that $\lambda_1+\lambda_2 > 2\kappa$ and which move with velocity 
\[G_\kappa= \Big ( \sum_{i < j}\frac{1}{\lambda_i+\lambda_j-2\kappa} \Big )^{-1},\] 
where $\lambda_1 \leq \hdots \leq \lambda_n$ denote the principal curvatures. This flow was introduced in \cite{Brendle-Huisken}. Unlike mean curvature flow, this flow preserves $\kappa$-two-convexity in the Riemannian setting. In the special case $\kappa = 0$, we will write 
\[G = \Big ( \sum_{i < j}\frac{1}{\lambda_i+\lambda_j} \Big )^{-1}.\] 
Finally, we denote by $\mu$ the reciprocal of the inscribed radius; that is, 
\[\mu (x,t)=\sup_{y\in M, 0<d(F(x,t),F(y,y))\leq \frac{1}{2} \, \text{\rm inj}(X)} \Big ( -\frac{2 \langle \exp_{F(x,t)}^{-1}(F(y,t)),\nu (x,t) \rangle}{d(F(x,t),F(y,t))^2} \Big ).\]
Our main result gives a sharp upper bound for $\mu$ in terms of the velocity $G_\kappa$. 

\begin{theorem}
\label{main.theorem}
Let $F:M^n\times [0,T)\to X^{n+1}$ be a one-parameter family of closed, embedded, $\kappa$-two-convex hypersurfaces which move with velocity $G_\kappa$. Then for any $\delta>0$ there exists a constant $C_1$, depending only on $\delta, T, M_0, X$, such that 
\[\mu \leq \Big ( \frac{(n-1)(n+2)}{4}+\delta \Big ) G_\kappa+C_1. \]
\end{theorem}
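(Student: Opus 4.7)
The plan is to adapt the two-point maximum principle of \cite{Brendle1}, as previously extended to fully nonlinear flows in \cite{Andrews-Langford-McCoy}, and push the resulting constant to the sharp value $\tfrac{(n-1)(n+2)}{4}$. Fix $\sigma = \tfrac{(n-1)(n+2)}{4}+\delta$ and a constant $C_1$ to be chosen. Consider the two-point function
\[Z(x,y,t) = -2\langle \exp_{F(x,t)}^{-1}(F(y,t)),\nu(x,t)\rangle - \big(\sigma G_\kappa(x,t)+C_1\big)\,d(F(x,t),F(y,t))^2,\]
defined on pairs $(x,y)$ with $0 < d(F(x,t),F(y,t)) < \tfrac{1}{2}\mathrm{inj}(X)$. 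The conclusion of the theorem is equivalent to $Z \leq 0$ everywhere. Choosing $C_1$ large so that $Z \leq 0$ at $t=0$, I would argue by contradiction: assume $t_0 > 0$ is the first time $Z$ attains $0$, at a spatial maximizer $(x_0,y_0)$.

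At this point, the first variation conditions $\nabla_x Z = 0$ and $\nabla_y Z = 0$ pin down the geometric relationship between the unit normals at $x_0$ and $y_0$ and the separation vector $w := \exp_{F(x_0,t_0)}^{-1}(F(y_0,t_0))$, forcing $F(x_0)$ and $F(y_0)$ to be the tangent point and the antipodal point of a sphere of radius $(\sigma G_\kappa(x_0)+C_1)^{-1}$, as in \cite{Andrews}. The Hessian inequality $\nabla^2 Z \leq 0$ provides a bilinear estimate on $T_{x_0}M \oplus T_{y_0}M$, and the parabolic condition $\partial_t Z \geq 0$, together with the evolution equation for $G_\kappa$ from \cite{Brendle-Huisken}, converts into a quantitative pointwise inequality. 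The key algebraic step is to feed into the Hessian inequality the test pairs $(e_i,e_i^*)$, where $\{e_i\}$ is an orthonormal frame at $x_0$ diagonalizing the Weingarten map and $e_i^*$ is its appropriately reflected frame transported to $y_0$, and then to sum the resulting inequalities with weights $(\lambda_i+\lambda_j-2\kappa)^{-1}$ matching the structure of $\dot G_\kappa^{ij}$. After a sequence of cancellations, the surviving terms combine to $-\delta\,G_\kappa^2\,d^2 + O(d^3) + (\text{ambient curvature})$; absorbing the error into $C_1$, this is strictly negative once $G_\kappa(x_0)$ is sufficiently large, contradicting $\partial_t Z \geq 0$.

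The main obstacle is the delicate sharp algebra that produces the coefficient $\tfrac{(n-1)(n+2)}{4}$. This constant arises from the model cylinder $\R \times S^{n-1}(r)$: the configuration $\lambda_1=0,\lambda_2=\cdots=\lambda_n=r^{-1}$ gives $G = \tfrac{4}{(n-1)(n+2)r}$ while the inscribed radius equals $r$, so $\mu/G_\kappa \to \tfrac{(n-1)(n+2)}{4}$ along this family and the estimate is asymptotically sharp. The algebraic reduction at $(x_0,y_0,t_0)$ must therefore be tight at the cylindrical configuration. As $\lambda_1+\lambda_j-2\kappa \to 0$ there, the weights $(\lambda_i+\lambda_j-2\kappa)^{-1}$ become singular, and the sharp coefficient emerges only after exact cancellation among the singular contributions from different summands. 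Carrying out this cancellation with the optimal coefficient is the principal new input beyond the weaker argument of \cite{Andrews-Langford-McCoy}; it parallels, in the fully nonlinear setting, the matrix identities developed in \cite{Brendle1} for mean curvature flow.

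Finally, one handles the diagonal limit $y \to x$: a Taylor expansion of $Z$ in $d$ as $y \to x$ reduces the required inequality to $\lambda_n \leq \sigma G_\kappa + C_1$ at $x$, which follows for large $G_\kappa$ from the pinching preserved under $G_\kappa$-flow and for bounded $G_\kappa$ by enlarging $C_1$. Ambient curvature errors of $X$ and initial-data contributions are likewise absorbed into $C_1$ using the compactness of $[0,T)$.
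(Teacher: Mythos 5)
Your proposal takes a genuinely different route from the paper, but it contains gaps that are not merely technical.

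The paper does \emph{not} use a two-point maximum principle. After first establishing the sharp eigenvalue bound $\lambda_n\leq (\tfrac{(n-1)(n+2)}{4}+\delta)G_\kappa+C_0$ (Propositions 3.1--3.2), it proves the inscribed-radius bound by an $L^p$--Stampacchia iteration on the function $f_\sigma=G_\kappa^{\sigma-1}\big(\mu-(\tfrac{(n-1)(n+2)}{4}+\delta)G_\kappa\big)-K_0$. The key analytic ingredients are the viscosity evolution inequality for $\mu$ from \cite{Andrews-Langford-McCoy},\cite{Brendle-Huisken}, the integral inequality for $\Delta\mu$ from \cite{Brendle2}, and the pointwise curvature derivative estimate from \cite{Brendle-Huisken}. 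Your proposal instead attempts to run a pointwise two-point maximum principle with the noncollapsing function $Z$. That is precisely the technique of Andrews and Andrews--Langford--McCoy, which in this setting is known to yield a \emph{non-sharp} noncollapsing constant controlled by the initial data. The step that would have to produce the sharp constant --- feeding the reflected frame pairs into $\nabla^2 Z\leq 0$, summing with the weights of $\dot G_\kappa^{ij}$, and observing an exact cancellation leaving $-\delta G_\kappa^2 d^2+O(d^3)$ --- is asserted but not justified, and there are serious reasons to doubt it: the inequality $\mu\leq\tfrac{(n-1)(n+2)}{4}G_\kappa$ is an equality on the shrinking cylinder, so the maximum principle conditions there degenerate to equalities; the weights $(\lambda_i+\lambda_j-2\kappa)^{-1}$ blow up as $\lambda_1+\lambda_2\to 2\kappa$; and the $O(d^3)$ and ambient-curvature error terms have no obvious pointwise mechanism to be dominated by the $\delta$-slack without appealing to derivative estimates, which defeats the supposed elementariness of the argument. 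Your statement that this would ``parallel the matrix identities developed in \cite{Brendle1}'' misreads that paper: \cite{Brendle1} also uses the integral $\Delta\mu$ inequality and Stampacchia iteration, not a two-point maximum principle.

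A second gap is in the diagonal case $y\to x$. You dispose of it by saying $\lambda_n\leq\sigma G_\kappa+C_1$ ``follows from the pinching preserved under $G_\kappa$-flow,'' but no such pinching is preserved. This bound is exactly the content of Proposition 3.2, and its proof is a nontrivial blow-up argument: one extracts a limit flow in Euclidean space using the curvature derivative estimate of \cite{Brendle-Huisken}, applies a strong maximum principle to $\beta G g_{ij}-h_{ij}$ to force the limit to split, and then classifies the possibilities. Even if the off-diagonal two-point argument could be made to work, you would still need to prove this eigenvalue bound separately and cannot treat it as a freebie. In short, the proposal substitutes a pointwise argument that has only ever produced the non-sharp constant in this setting, and leaves unaddressed both the algebraic sharpness and the eigenvalue bound that the paper establishes by entirely different means.
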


Note that $\mu = \frac{(n-1)^2(n+2)}{4} \, G$ on a cylinder, so the constant in Theorem \ref{main.theorem} is sharp. Thus, Theorem \ref{main.theorem} is of a similar nature as the cylindrical estimate in \cite{Brendle-Huisken}, but provides additional information about the formation of necks. Like the results in \cite{Brendle-Huisken}, our estimate extends with straightforward modifications to a larger class of fully nonlinear flows; see \cite[Remark 1.3]{Brendle-Huisken}. 

We briefly sketch the proof of Theorem \ref{main.theorem}. In a first step, we establish a sharp upper bound for the largest eigenvalue of the second fundamental form. More precisely, we show that $\lambda_n\leq (\frac{(n-1)(n+2)}{4}+\delta)G_\kappa+C_0$. This inequality is a variant of the cylindrical estimate in \cite{Brendle-Huisken}, and can be proven using integral estimates and Stampacchia as in \cite[Section 3]{Brendle-Huisken}. Alternatively, we can prove this estimate by a contradiction argument, using the pointwise curvature derivative estimate from \cite{Brendle-Huisken}. We will follow the latter approach here. Having established the upper bound for $\lambda_n$, we then use integral estimates and Stampacchia iteration to obtain a sharp upper bound for $\mu$. This step uses the evolution equation for the function $\mu$ (see \cite{Andrews-Langford-McCoy} or \cite[Section 4]{Brendle-Huisken}). Another crucial ingredient is an estimate for $\Delta \mu$ from \cite{Brendle1},\cite{Brendle2}. This inequality is independent of any evolution equation, and only makes use of the almost convexity property.

\section{Overview of results from \cite{Brendle-Huisken}}

In \cite{Brendle-Huisken} many important properties, including convexity estimates, cylindrical estimate, and curvature derivative estimate are established. For the reader's convenience, we recall some theorems in \cite{Brendle-Huisken} which we will use in the following sections.

\begin{lemma}
Consider a family of hypersurfaces moving with velocity $G$. If $X=\mathbb{R}^{n+1}$, under this flow we have basic evolution equations
\begin{align*}
\frac{\partial g_{ij}}{\partial t}&=-2Gh_{ij}\\
\frac{\partial h^i_{\ j}}{\partial t}&=\nabla^i\nabla_j G+Gh^i_{\ p}h^p_{\ j}\\
&=\frac{\partial G}{\partial h_{kl}} (\nabla_k\nabla_l h^i_{\ j}+h^p_{\ k}h_{pl}h^i_{\ j})+\frac{\partial^2 G}{\partial h_{kl}\partial h_{pq}}\nabla^i h_{kl}\nabla_j h_{pq}\\
\frac{\partial G}{\partial t}&=\frac{\partial G}{\partial h_{kl}} (\nabla_k\nabla_l G+h^p_{\ k}h_{pl}G).
\end{align*}
For a general ambient Riemannian manifold $X$ we have
\[\frac{\partial G_\kappa}{\partial t}=\frac{\partial G_\kappa}{\partial h_{kl}} (\nabla_k\nabla_l G_\kappa+h^p_{\ k}h_{pl}G_\kappa +\bar{R}_{k0l0} G_\kappa).\]
\end{lemma}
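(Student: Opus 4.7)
The plan is to derive each of these evolution formulas from two ingredients: the standard variational formulas for a hypersurface moving with normal velocity $-G_\kappa\,\nu$, and the chain rule combined with a Simons-type commutator identity for $G$ viewed as a smooth symmetric function of the Weingarten map.

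First I would handle the metric. Differentiating $g_{ij}=\langle \partial_i F,\partial_j F\rangle$ and using the Weingarten relation $\partial_i\nu=h_i^{\ k}\partial_k F$ gives $\partial_t g_{ij}=-2Gh_{ij}$, and hence $\partial_t g^{ij}=2Gh^{ij}$. Next, for the Weingarten tensor $h^i_{\ j}$, I would differentiate $h_{ij}$ using the Gauss--Weingarten relations. In Euclidean space this gives $\partial_t h_{ij}=\nabla_i\nabla_j G-G\,h_i^{\ k}h_{kj}$; raising an index with $\partial_t g^{ij}$ contributes an extra $+2G\,h^i_{\ p}h^p_{\ j}$, which produces the first line $\partial_t h^i_{\ j}=\nabla^i\nabla_j G+G\,h^i_{\ p}h^p_{\ j}$.

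To pass from this to the second expression, I would view $G$ as a smooth symmetric function of $h_{kl}$ and apply the chain rule
\[\nabla^i\nabla_j G=\frac{\partial G}{\partial h_{kl}}\,\nabla^i\nabla_j h_{kl}+\frac{\partial^2 G}{\partial h_{kl}\partial h_{pq}}\,\nabla^i h_{kl}\,\nabla_j h_{pq}.\]
Then I would interchange the pairs of derivatives acting on $h_{kl}$ using the Codazzi identity $\nabla_i h_{jk}=\nabla_j h_{ik}$ together with the commutator formula for $[\nabla_i,\nabla_j]$ acting on a $(0,2)$-tensor. On a hypersurface in Euclidean space, that commutator produces precisely the Simons-type correction $h^p_{\ k}h_{pl}h^i_{\ j}$ needed to complete the displayed identity. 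Finally, for $\partial_t G$ I would apply the chain rule $\partial_t G=\frac{\partial G}{\partial h^k_{\ l}}\,\partial_t h^k_{\ l}$ and substitute the first evolution formula, observing that $\frac{\partial G}{\partial h_{kl}}$ is symmetric so the combination collapses to $\frac{\partial G}{\partial h_{kl}}(\nabla_k\nabla_l G+h^p_{\ k}h_{pl}\,G)$.

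The Riemannian version for $G_\kappa$ follows the same outline, but now the commutators $[\nabla_i,\nabla_j]$ and the derivation of $\partial_t h_{ij}$ pick up ambient Riemann tensor contributions. Restricting these to the normal direction via the Gauss and Codazzi equations and then contracting with $\frac{\partial G_\kappa}{\partial h_{kl}}$ produces the single extra term $\bar R_{k0l0}\,G_\kappa$. The main obstacle here is purely bookkeeping: tracking signs, index placements, and the various instances of the Codazzi/Simons commutators; conceptually this is a standard hypersurface computation, and the final formulas are precisely the ones recorded in \cite{Brendle-Huisken}.
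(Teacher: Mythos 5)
The paper does not actually give a proof of this lemma; Section~2 explicitly records it as a recall of facts from \cite{Brendle-Huisken}, so there is no internal argument to compare against. Your sketch is the standard derivation and the route is the right one: compute $\partial_t g_{ij}$, $\partial_t h_{ij}$ from the Gauss--Weingarten relations, raise an index, expand $\nabla^i\nabla_j G$ by the chain rule, and convert $\nabla^i\nabla_j h_{kl}$ to $\nabla_k\nabla_l h^i_{\ j}$ via Codazzi and the Simons commutator.

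One ingredient you should make explicit, because it is not mere bookkeeping: the degree-one homogeneity of $G$ in the principal curvatures, i.e.\ Euler's relation $\frac{\partial G}{\partial h_{kl}}\,h_{kl}=G$. When you contract the Simons identity
\[
\nabla^i\nabla_j h_{kl}-\nabla_k\nabla_l h^i_{\ j}=h^i_{\ j}\,h^p_{\ k}h_{pl}-h_{kl}\,h^i_{\ p}h^p_{\ j}+(\text{terms antisymmetric under }k\leftrightarrow l\text{ after pairing with }h)
\]
against $\frac{\partial G}{\partial h_{kl}}$, two of the four quadratic terms cancel by the symmetry of $\frac{\partial G}{\partial h_{kl}}$ and its commutation with $h$, and the term $-\frac{\partial G}{\partial h_{kl}}h_{kl}\,h^i_{\ p}h^p_{\ j}$ becomes $-G\,h^i_{\ p}h^p_{\ j}$ \emph{only} because of homogeneity; this is what absorbs the $+G\,h^i_{\ p}h^p_{\ j}$ coming from $\partial_t h^i_{\ j}=\nabla^i\nabla_j G+G\,h^i_{\ p}h^p_{\ j}$ and leaves exactly the single reaction term $\frac{\partial G}{\partial h_{kl}}h^p_{\ k}h_{pl}\,h^i_{\ j}$. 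So the commutator does not ``precisely'' produce $h^p_{\ k}h_{pl}\,h^i_{\ j}$ on its own; you need the Euler identity and the simultaneous diagonalizability of $\frac{\partial G}{\partial h}$ with $h$ to collapse the extra terms. For a non-homogeneous speed the displayed second line would be false. With that point made explicit your outline is complete and consistent with the derivation in \cite{Brendle-Huisken}.
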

It is shown in \cite{Brendle-Huisken} that $G_\kappa$, $H$ and $|h|$ are comparable along the flow.

\begin{theorem}\label{cylindrical} \cite[Theorem 3.1]{Brendle-Huisken}
For any $\delta>0$ there exists a constant C depending on $\delta,T$ and the initial hypersurface $M_0$ such that
\begin{equation}
H \leq (\frac{(n-1)^2(n+2)}{4}+\delta)G_\kappa+C.
\end{equation}
\end{theorem}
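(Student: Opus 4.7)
The plan is a Stampacchia iteration, following the approach of \cite[Section 3]{Brendle-Huisken}. Set $\alpha = \frac{(n-1)^2(n+2)}{4}$ and, for each $\sigma \in (0,\delta]$, introduce
\[ f_\sigma := H - (\alpha + \sigma)\, G_\kappa.\]
On the standard $\kappa$-cylinder ($\lambda_1 = \kappa$, $\lambda_2 = \cdots = \lambda_n$) one checks $H/G_\kappa = \alpha$, so $f_\sigma = -\sigma G_\kappa < 0$ there and the constant $\alpha$ is optimal. The theorem thus reduces to the estimate $\sup_M f_\sigma \leq C$ with $C = C(\sigma, T, M_0, X)$.

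First I would compute the evolution of $f_\sigma$ under the flow's linearized operator $\mathcal{L} := \frac{\partial G_\kappa}{\partial h_{kl}} \nabla_k \nabla_l$. Using the equations in the preceding lemma, the reaction part of $(\partial_t - \mathcal{L}) H$ contains the concavity contribution $\frac{\partial^2 G_\kappa}{\partial h_{kl}\partial h_{pq}} \nabla^i h_{kl} \nabla_i h_{pq} \leq 0$, while $(\partial_t - \mathcal{L}) G_\kappa$ is the pure reaction $\frac{\partial G_\kappa}{\partial h_{kl}} h^p_{\ k} h_{pl} G_\kappa$. Combining these and exploiting the Codazzi identity, one seeks a schematic inequality
\[ (\partial_t - \mathcal{L})\, f_\sigma \leq C\, G_\kappa\, f_\sigma + C_\kappa - c(\sigma)\, Q(\nabla h),\]
where $Q(\nabla h) \geq 0$ is a positive-definite quadratic form in the covariant derivatives of the second fundamental form and $c(\sigma) > 0$. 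The critical algebraic observation is that on $\{f_\sigma \geq 0\}$ the eigenvalue configuration is pushed toward the cylindrical direction but with $\lambda_1 - \kappa$ slightly negative, and it is precisely there that strict concavity of $G_\kappa$ supplies the coercive multiplier $c(\sigma)$.

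With this evolution inequality in hand, I would run the Stampacchia iteration. For $p$ large and a small exponent $\eta = \eta(\sigma) > 0$, set $v := (f_\sigma)_+\, G_\kappa^{-\eta}$ and compute $\frac{d}{dt}\int_M v^p\, d\mu_t$; integrating the $\mathcal{L}$-term by parts and absorbing bad terms using the gradient coercivity from Step 1 yields
\[ \frac{d}{dt}\int_M v^p\, d\mu_t + c(p-1)\int_M v^{p-2} |\nabla v|^2\, d\mu_t \leq Cp \int_M v^p\, d\mu_t + C.\]
Applying the Michael-Simon Sobolev inequality to $v^{p/2}$ and iterating $p \mapsto \tfrac{n}{n-1}\,p$ yields $\sup_M v \leq C$, equivalently $H \leq (\alpha+\sigma) G_\kappa + C\, G_\kappa^{1-\eta}$; absorbing the sublinear term with $\sigma = \delta/2$ then gives the stated estimate.

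The main obstacle is Step 1: the pointwise algebraic derivation of the coercive term $Q(\nabla h)$ of the correct size on $\{f_\sigma \geq 0\}$. Here the sharpness of $\alpha$ is encoded, since one must verify that the concavity defect of $G_\kappa$ along admissible non-cylindrical directions, combined with Codazzi symmetries, is strong enough to defeat the unfavorable reaction contributions from $h^p_{\ k} h_{pl}$ and from the ambient curvature $\bar R$. Once this pointwise inequality is secured, the remaining integral and iteration steps proceed as outlined.
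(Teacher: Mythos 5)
This statement (Theorem~\ref{cylindrical}) is \emph{quoted}, not proved, in the paper: it is lifted verbatim from Theorem~3.1 of \cite{Brendle-Huisken} as part of a background section ``Overview of results from \cite{Brendle-Huisken}.'' So there is no ``paper's own proof'' to compare against; what you have sketched is, roughly, the Stampacchia-iteration argument of \cite[Section~3]{Brendle-Huisken} itself. It is worth noting that the present paper \emph{does} prove a closely related (and in one respect stronger) inequality, Proposition~\ref{cylinder}, bounding $\lambda_n$ rather than $H$, and it deliberately chooses a \emph{different} route for that: a blow-up/contradiction argument via the pointwise curvature derivative estimate (Theorem~\ref{gradient}) and the rigidity statement in Proposition~\ref{constant}. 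The blow-up route avoids the evolution inequality for $f_\sigma$ entirely, replacing all the Stampacchia machinery with a compactness/rigidity dichotomy; the Stampacchia route you propose requires the pointwise gradient coercivity as input but gives a direct integral argument.

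Two substantive cautions about your sketch. First, the ``critical algebraic observation'' you flag as the main obstacle really is the entire content of the theorem, and it is not merely an obstacle --- it is a delicate inequality (roughly, that on the set $\{H \geq (\alpha+\sigma)G_\kappa\}$, the concavity defect of $G_\kappa$ in the gradient directions, together with Codazzi, dominates a quadratic in $\nabla h$ with a coefficient proportional to $\sigma$). Stating it as a schematic $(\partial_t - \mathcal{L})f_\sigma \leq C G_\kappa f_\sigma + C_\kappa - c(\sigma)Q(\nabla h)$ without deriving it leaves the heart of the argument unverified; indeed the reaction term one actually gets is of order $G_\kappa^2 f_\sigma$, and the point is that a $\sigma$- or $\eta$-weighted small multiple of it must be absorbed via the coercive term, not bounded outright. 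Second, your check that ``$H/G_\kappa = \alpha$ on the standard $\kappa$-cylinder with $\lambda_1=\kappa$'' is not exact for $\kappa>0$: one finds $G_\kappa = \frac{4(\lambda-\kappa)}{(n-1)(n+2)}$ and $H = \kappa+(n-1)\lambda$, so $H/G_\kappa \to \alpha$ only as $\lambda\to\infty$. This asymptotic discrepancy is precisely why the theorem carries the additive constant $C$, and it is worth being explicit about, since it affects how the set $\{f_\sigma \geq 0\}$ is controlled for large $G_\kappa$.
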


\begin{corollary}\label{convexity} \cite[Corollary 3.7]{Brendle-Huisken}
For any $\delta>0$ there exists a constant $K_1$ depending on $\delta, T$ and the initial hypersurface $M_0$ such that
\begin{equation}
\lambda_1\geq -\delta G_\kappa-K_1.
\end{equation}

\end{corollary}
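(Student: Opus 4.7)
The plan is to establish the convexity estimate by Stampacchia iteration applied to the ``lack of convexity'' function
\[f_\sigma := (-\lambda_1 - \sigma G_\kappa)_+\]
for a fixed $\sigma \in (0,\delta)$, following the pattern of the cylindrical estimate (Theorem \ref{cylindrical}); bounding $f_\sigma \leq K_1$ clearly implies the corollary. Because $\lambda_1$ is only Lipschitz in the Weingarten operator, I would first approximate it by a smooth, symmetric, concave function $\phi(h) \approx -\lambda_1$ defined on the $\kappa$-two-convex cone, carry out all estimates for $\phi(h)$, and pass to the limit at the end.

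The heart of the argument is obtaining a favorable differential inequality for $f_\sigma$. Combining the evolution of $h^i_{\ j}$ with that of $G_\kappa$ from the lemma above yields, on the set $\{f_\sigma > 0\}$, a pointwise inequality of the schematic form
\[\frac{\partial f_\sigma}{\partial t} \leq \frac{\partial G_\kappa}{\partial h_{kl}} \nabla_k \nabla_l f_\sigma + \mathcal{Q}(\nabla h) + \mathcal{R}(h),\]
where $\mathcal{Q}$ collects the gradient terms produced by the second derivatives of $\phi$ and of $G_\kappa$, and $\mathcal{R}$ is a zeroth-order reaction with a priori unfavorable sign. The crucial observation is that on $\{f_\sigma > 0\}$ the principal curvature vector lies a definite distance away from the convex cone, while the cylindrical estimate simultaneously pins it near the cylindrical configuration $(0, r^{-1}, \ldots, r^{-1})$; in this narrow algebraic region, the explicit form of $G_\kappa$ forces $\mathcal{R}(h)$ to be dominated by a fixed fraction of the positive gradient contribution in $\mathcal{Q}(\nabla h)$, up to a uniformly bounded error coming from the ambient term $\bar R_{k0l0}$ appearing in the evolution lemma.

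Once this pointwise inequality is in place, I would multiply by $f_\sigma^{p-1}$ times a suitable power of $G_\kappa$, integrate over $M_t$, integrate by parts, and invoke the Michael--Simon Sobolev inequality together with the curvature derivative estimates from \cite{Brendle-Huisken} to arrive at
\[\frac{d}{dt} \int_{M_t} f_\sigma^p \leq -c \int_{M_t} |\nabla f_\sigma^{p/2}|^2 + C(p) \int_{M_t} f_\sigma^p\]
for all sufficiently large $p$. Stampacchia iteration in $p$ then upgrades this $L^p$ control to the pointwise bound $f_\sigma \leq K_1$. The principal obstacle I anticipate is the algebraic step in the second paragraph: one must extract from the cylindrical estimate a quantitative closeness to the cylindrical configuration that is sharp enough to absorb $\mathcal{R}$ for arbitrarily small $\sigma>0$, and this pinching argument, coupled with the careful choice of the smoothing $\phi$, the weight $G_\kappa^a$, and the exponent $p$, is precisely what controls the blow-up of $K_1$ as $\delta \downarrow 0$.
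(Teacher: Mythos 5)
The statement you are proving is imported verbatim from \cite[Corollary~3.7]{Brendle-Huisken}; the present paper offers no proof of it, so there is no in-paper argument to compare against. Judged on its own terms, your sketch does correctly identify the general method used in \cite{Brendle-Huisken} (an $L^p$--Stampacchia iteration for a scale-normalized ``lack of convexity'' quantity, after a suitable smoothing of $\lambda_1$), but the step you yourself flag as the ``principal obstacle'' contains a genuine gap rather than a mere technical difficulty.

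Specifically, the cylindrical estimate $H \leq (\tfrac{(n-1)^2(n+2)}{4}+\delta)G_\kappa + C$ is a \emph{one-sided} bound and does \emph{not} pin the principal curvature vector near the cylindrical configuration $(0,r^{-1},\dots,r^{-1})$. It is satisfied, for instance, by every uniformly convex configuration (in particular the round sphere), and cylindrical data lie on the extremal boundary of the allowed region, not in its interior. Consequently there is no ``narrow algebraic region'' available, and the purported \emph{pointwise} domination of the reaction term $\mathcal{R}(h)$ by the gradient term $\mathcal{Q}(\nabla h)$ on $\{f_\sigma>0\}$ is simply false: at a slightly perturbed cylinder one can make $\nabla h$ as small as one pleases while $h$, hence $\mathcal{R}(h)$, remains of unit size after rescaling, so $\mathcal{Q}$ cannot absorb $\mathcal{R}$ pointwise. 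In the actual argument of Brendle--Huisken (as in Huisken--Sinestrari for mean curvature flow), the absorption of the bad reaction term happens only \emph{in an integral sense}, via a Poincar\'e-type inequality extracted from testing the Simons identity against the cut-off power $f_{\sigma,+}^{p-1}$ and exploiting the concavity of the speed; this is a separate, substantive ingredient that your outline omits and cannot be replaced by the cylindrical estimate. Without it, the differential inequality you wish to derive for $\int_{M_t} f_\sigma^p$ has an uncontrolled positive term, and the Stampacchia iteration does not close.
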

\begin{theorem}\label{gradient} \cite[Theorem 6.2]{Brendle-Huisken}
There exist constants $C_\#$ and $G_\#$ such that 
\begin{equation}
|\nabla h| \leq C_\# \, G_\kappa^2, \qquad |\nabla^2 h| \leq C_\# \, G_\kappa^3
\end{equation} 
whenever $G_\kappa \geq G_\#$.
\end{theorem}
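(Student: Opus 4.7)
The plan is to argue by contradiction using a point-picking rescaling argument combined with parabolic regularity for concave fully nonlinear operators. Suppose that the estimate fails for every choice of $C_\#$ and $G_\#$. Then one obtains a sequence of spacetime points $(x_k,t_k)$ with $G_\kappa(x_k,t_k)\to\infty$ along which $|\nabla h|(x_k,t_k)/G_\kappa(x_k,t_k)^2 \to\infty$. Applying Hamilton's point-picking technique, one may additionally arrange that $(x_k,t_k)$ nearly realizes the supremum of $|\nabla h|/G_\kappa^2$ on a parabolic neighborhood of radius comparable to $G_\kappa(x_k,t_k)^{-1}$ whose rescaled size tends to infinity.

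Setting $r_k = G_\kappa(x_k,t_k)$, I would rescale both the induced flow and the ambient metric by the factor $r_k$, working in geodesic normal coordinates on $X$ about $F(x_k,t_k)$. In the rescaled picture, the ambient curvature and the parameter $\kappa/r_k$ both tend to zero, and the injectivity radius tends to infinity, so the rescaled flows are asymptotically Euclidean flows by $G$. Theorem \ref{cylindrical} bounds $H$ uniformly from above in the rescaled picture, hence $|h|$ is bounded. Corollary \ref{convexity}, applied with $\delta\to 0$ via a diagonal subsequence argument, implies that any subsequential limit is weakly convex. Combined with the comparability of $G_\kappa$ with $H$ and $|h|$ noted after the lemma of Section 2, this places the limit squarely inside the admissible cone where the operator $G$ is uniformly elliptic and concave.

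One can then extract a smooth limit flow by Arzel\`a--Ascoli. The key regularity input is that $G_\kappa$ is a concave function of the Weingarten tensor on the cone of two-convex symmetric matrices, so the Krylov--Evans theorem for concave fully nonlinear parabolic equations yields interior $C^{2,\alpha}$ bounds for the evolving hypersurfaces, after which Schauder theory upgrades this to bounds on derivatives of every order. In particular $|\nabla h|$ is uniformly bounded on the limit, contradicting the construction. The second-derivative estimate $|\nabla^2 h|\leq C_\# G_\kappa^3$ follows from the same procedure applied to $|\nabla^2 h|/G_\kappa^3$.

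The main obstacle is ensuring that the blow-up limits stay strictly inside the two-convex cone so that $G$ remains uniformly elliptic at the relevant scale, rather than degenerating where some $\lambda_i+\lambda_j\to 0$; this is precisely where the convexity estimate, together with the uniform comparability of $G_\kappa$ with $|h|$, plays the essential role, and it is also what forces the point-picking to be carried out with care so that the good curvature inequalities can be propagated to a definite parabolic neighborhood of each marked point.
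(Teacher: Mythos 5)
The paper does not prove this theorem: it is quoted verbatim as \cite[Theorem~6.2]{Brendle-Huisken} in the survey Section~2, and is used as a black box in the blow-up argument of Proposition~3.2 and the integral estimates of Section~4. So there is no ``paper's own proof'' to compare against; what follows is an assessment of the proposal on its own terms.

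The ingredients you invoke (cylindrical estimate for a $|h|$-bound, convexity estimate to place the Weingarten tensor in the uniformly elliptic/concave cone, Krylov--Evans and Schauder for higher regularity of the graph function) are all the right ones, and indeed the interior-regularity route is the standard way to obtain gradient estimates for fully nonlinear flows where the classical Huisken-type $|\nabla h|^2$ evolution computations become unmanageable. However, there is a genuine gap in how you set up the blow-up. You point-pick for the scale-invariant ratio $|\nabla h|/G_\kappa^2$ and then rescale by $r_k = G_\kappa(x_k,t_k)$. This normalizes $G_\kappa=1$ at the marked point, but gives no control whatsoever on $G_\kappa$ (equivalently $|h|$) at nearby points in the rescaled parabolic cylinder: the cylindrical estimate only gives the pointwise bound $H\leq CG_\kappa$, and your chosen point-picking controls $|\nabla h|/G_\kappa^2$ in the neighborhood, not $G_\kappa$ itself. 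Without a two-sided bound on $G_\kappa$ in a whole parabolic neighborhood of definite rescaled size, you cannot write the rescaled flow as a graph with uniformly bounded $C^2$ norm, and the Krylov--Evans step never gets off the ground. The missing ingredient is a noncollapsing statement or a local Harnack-type inequality for $G_\kappa$ (exactly the kind of control furnished by the inscribed/exscribed radius bounds of Brendle--Huisken), which is needed to guarantee that bounding the curvature at one point bounds it in a definite parabolic neighborhood. Separately, the contradiction framing is logically redundant: once you have the graph representation and uniform ellipticity/concavity on a parabolic cylinder of scale $G_\kappa^{-1}$, the Krylov--Safonov/Krylov--Evans/Schauder chain delivers $|\nabla h|\leq C G_\kappa^2$ and $|\nabla^2 h|\leq C G_\kappa^3$ directly as a priori interior estimates, without any blow-up or contradiction. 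The ``extract a smooth limit, then regularize'' ordering also needs to be reversed; uniform regularity along the sequence must precede compactness.
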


\section{A sharp bound for the largest curvature eigenvalue}

In this section, we prove a sharp estimate for the largest eigenvalue of the second fundamental form. We begin with an auxiliary result concerning flows in Euclidean space moving with velocity $G$.

\begin{proposition}\label{constant}
Let $F:M^n\times (-\theta,0]\to\mathbb{R}^{n+1}$, be a one-parameter family of embedded, weakly convex hypersurfaces which move with velocity $G$. Suppose there is a constant $\beta>0$ such that $h_{ij}\leq \beta Gg_{ij}$ in $M^n\times (-\theta,0]$ and $\lambda_n (x_0,t_0)=\beta$ at some point $(x_0,t_0)\in M^n\times (-\theta,0]$, then $\beta\leq \frac{(n-1)(n+2)}{4}$.
\end{proposition}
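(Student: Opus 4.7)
The plan is to apply the parabolic maximum principle at the touching point $(x_0, t_0)$ to the tensor $T_{ij} := \beta G\, g_{ij} - h_{ij}$. By hypothesis $T_{ij} \geq 0$ on $M^n \times (-\theta, 0]$ and $T_{ij}$ degenerates in the top eigendirection of $h_{ij}$ at $(x_0, t_0)$. After normalizing so that $G(x_0, t_0) = 1$---permissible because the ratio $\lambda_n/G$ is invariant under the parabolic rescaling of the flow---I fix a local orthonormal frame at $(x_0, t_0)$ diagonalizing $h_{ij}$, with eigenvalues $0 \leq \lambda_1 \leq \cdots \leq \lambda_n = \beta$. Assuming first that $\lambda_n$ is simple, I extend the top eigenvector to a smooth unit vector field $v$ in a parabolic neighborhood and set $\psi := T_{ij} v^i v^j$, which is nonnegative and vanishes at $(x_0, t_0)$; the case of higher multiplicity can be treated either by perturbation or by a Hamilton-type vector-valued maximum principle applied to $T_{ij}$ directly.

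Next, I compute the parabolic operator $\partial_t - F^{kl}\nabla_k\nabla_l$ applied to $\psi$, where $F^{kl} = \partial G/\partial h_{kl}$, using the evolution equations of Lemma~2.1. A key feature is that both $\partial_t h^i_{\ j}$ and $\partial_t G$ are governed by the \emph{same} linearized operator $F^{kl}\nabla_k\nabla_l$, so the top-order reaction terms cancel in the combination that defines $T_{ij}$. The remaining zeroth-order terms of the form $F^{kl} h^p_{\ k} h_{pl}\, \psi$ vanish at the touching point because $\psi(x_0,t_0) = 0$. What survives is the third-order gradient contribution $-\frac{\partial^2 G}{\partial h_{kl}\partial h_{pq}}\nabla_n h_{kl}\nabla_n h_{pq}$ coming from the Hessian of $G$ in curvature, together with the standard positive correction from the variation of the top eigenvector, which produces a term proportional to $\sum_{m<n} (\lambda_n - \lambda_m)^{-1} F^{pq}\nabla_p h_{mn}\nabla_q h_{mn}$. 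The maximum principle forces the sum of these terms to be nonpositive at $(x_0, t_0)$.

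To extract the bound $\beta \leq (n-1)(n+2)/4$, I would combine the first-order vanishing conditions $\nabla_k \psi = 0$ with the Codazzi identity $\nabla_i h_{jk} = \nabla_j h_{ik}$ to derive linear relations among the components of $\nabla h$ at $(x_0, t_0)$. Together with the inverse-concavity structure of $G$ reflected in $\partial^2 G/\partial h_{kl}\partial h_{pq}$ and with the weak convexity $\lambda_1 \geq 0$, these reduce the reaction inequality to an algebraic constraint on the eigenvalues. A direct computation using $G^{-1} = \sum_{i<j}(\lambda_i + \lambda_j)^{-1}$ shows that the extremal configuration is the cylinder $\lambda_1 = 0$, $\lambda_2 = \cdots = \lambda_n$, for which $\lambda_n/G = (n-1)(n+2)/4$. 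The main obstacle is precisely this final algebraic step: choreographing the Codazzi relations, the explicit structure of $\partial^2 G/\partial h_{kl}\partial h_{pq}$, and weak convexity so that the third-order bookkeeping collapses cleanly to the sharp cylindrical bound is the heart of the argument; without convexity of $\lambda_1$ the gradient terms cannot be closed up, which is why the proposition is restricted to weakly convex flows in $\mathbb{R}^{n+1}$.
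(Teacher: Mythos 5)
Your choice of the auxiliary tensor $T_{ij}=\beta G g_{ij}-h_{ij}$ and the observation that the concavity of $G$ makes the term $-\tfrac{\partial^2 G}{\partial h_{kl}\partial h_{pq}}\nabla_n h_{kl}\nabla_n h_{pq}$ in its evolution favorably signed match the paper. But the way you plan to \emph{extract} the bound on $\beta$ is where the argument breaks, and it is not merely a bookkeeping problem. A purely local computation of $(\partial_t-F^{kl}\nabla_k\nabla_l)\psi$ at the single touching point $(x_0,t_0)$, combined with $\nabla\psi=0$ and Codazzi, produces constraints only on first derivatives of $h$ (namely $\nabla_n h_{kl}=\rho h_{kl}$ and $\nabla_k h_{nj}=0$ for $j\neq n$). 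These are relations among components of $\nabla h$ at one spacetime point and do not translate into an algebraic inequality for the eigenvalues; for a fixed hypersurface one can arrange such touching with arbitrarily large $\beta$, so the bound $\beta\leq\frac{(n-1)(n+2)}{4}$ cannot come out of the pointwise second-derivative test alone.

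What the paper does instead, and what your proposal is missing, is the propagation step via the \emph{strong} maximum principle for tensors: the inequality $\tfrac{\partial}{\partial t}u^i_{\ j}\geq F^{kl}\nabla_k\nabla_l u^i_{\ j}+F^{kl}h^p_{\ k}h_{pl}u^i_{\ j}$ together with $u\geq 0$ and the vanishing of the smallest eigenvalue at one point forces $\lambda_n=\beta G$ on all of $M\times(-\theta,0]$. Only after this global rigidity does the argument close: one then shows $\nabla\lambda_n=0$ (splitting on $\dim E$, and in the higher multiplicity case Codazzi plus the second-order conditions give $v^k\nabla_k h_{ij}=0$ for $v\in E$), whence $G$ is spatially constant, so $\nabla^i\nabla_j G=0$ and the evolution of $h^i_{\ j}$ reduces to the ODE $\partial_t\lambda_j=G\lambda_j^2$. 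Differentiating the identity $\beta G-\lambda_n\equiv 0$ in time then yields $\sum_j\frac{\partial G}{\partial\lambda_j}\lambda_j(\lambda_j-\lambda_n)=0$, and since each summand is nonpositive by weak convexity, each vanishes: for every $j$ either $\lambda_j=0$ or $\lambda_j=\lambda_n$. Two-convexity then leaves exactly the cylinder and the sphere, giving $\beta\leq\frac{(n-1)(n+2)}{4}$. So the algebraic constraint comes from the \emph{time} derivative of the identity $\beta G=\lambda_n$ after reduction to an ODE, not from the spatial derivative relations at the touching point. Without the strong maximum principle and the ensuing ODE reduction, your final paragraph has no mechanism to produce a bound on $\beta$.
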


\begin{proof}
Denote by $u_{ij}$ the two-tensor $\beta Gg_{ij}-h_{ij}$. The evolution equation of $u_{ij}$ is
\[\frac{\partial}{\partial t}u^i_{\ j} =\frac{\partial G}{\partial h_{kl}}\nabla_k\nabla_l u^i_{\ j}+\frac{\partial G}{\partial h_{kl}}h_{\ k}^p h_{pl}u^i_{\ j}-\frac{\partial^2 G}{\partial h_{kl}\partial h_{pq}}\nabla^{i}h_{kl}\nabla_j h_{pq},\]
and the last term on the right hand side is nonnegative since $G$ is concave. The strong maximum principle applied to $u_{ij}$ implies that the smallest eigenvalue of $u_{ij}$ is equal to $0$ at each point. (A detailed proof is given in the appendix.) Thus, $\beta G-\lambda_n = 0$ at each point. 

\begin{claim}
$\lambda_n$ is a spatial constant.
\end{claim}

To prove the claim, let us fix an arbitrary point $(x_0,t_0) \in M \times (\theta,0]$. We want to show that $\nabla \lambda_n=0$ at this point. Denote by $E$ the eigenspace of $h_{ij}$  with eigenvalue $\lambda_n = \beta G$. We break the discussion into two cases, depending on the dimension of $E$.\\

\textit{Case 1:} Suppose first that $\dim E=1$. In this case, we can choose a normal coordinate at $(x_0,t_0)$ such that at $(x_0,t_0)$ we have
\begin{align*}
\lambda_n&=h^n_{\ n}\\
\frac{\partial}{\partial t}\lambda_n &=\frac{\partial}{\partial t}h^n_{\ n}\\
\nabla_i \lambda_n &=\nabla_j h^n_{\ n}\\
\nabla_i \nabla_j \lambda_n &=\nabla_i \nabla_j h^n_{\ n}+\sum_{k \neq n} \frac{2}{\lambda_n-\lambda_k} \nabla_i h_{kn} \nabla_j h_{kn}.
\end{align*}
Then at this point we have
\begin{align*}
0&\geq \Big ( \frac{\partial}{\partial t}-\frac{\partial G}{\partial h_{kl}}\nabla_k\nabla_l-\frac{\partial G}{\partial h_{kl}} h^p_{\ k}h_{pk} \Big ) (\beta G-\lambda_n)\\
 &=-\frac{\partial^2 G}{\partial h_{kl}\partial h_{pq}}\nabla_n h_{kl}\nabla^n h_{pq}+  \frac{\partial G}{\partial h_{kl}}\sum_{j \neq n}\frac{2}{\lambda_n-\lambda_j} \nabla_k h_{nj} \nabla_l h_{nj} \geq 0.
\end{align*}
So we know $\nabla_n h_{kl}=\rho h_{kl}$ for some constant $\rho$ and $\nabla_k h_{nj}=0$ whenever $j\neq n$. Together with the Codazzi equation we get $\nabla_{n}h_{ij}=0$ and $\nabla_{j}\lambda_n=\nabla_{n}h^n_{\ j}=0$.\\

\textit{Case 2:} Suppose next that $\dim E\geq 2$. Let $v,\tilde{v}$ be two orthonormal vectors in $E$. Extend $v$ to a unit vector field in spacetime such that $\nabla v = 0$ at $(x_0,t_0)$ and $\frac{\partial}{\partial t} v^i = G h_j^i v^j$ at $(x_0,t_0)$. Then we have
\begin{align*}
0 &\geq \Big ( \frac{\partial}{\partial t}-\frac{\partial G}{\partial h_{kl}}\nabla_k\nabla_l-\frac{\partial G}{\partial h_{kl}} h^p_{\ k}h_{pk} \Big ) (\beta G - h_{ij} v^i v^j) \\
&=-\frac{\partial^2 G}{\partial h_{kl}\partial h_{pq}}\nabla_i h_{kl}\nabla_j h_{pq}v^iv^j \geq 0,
\end{align*}
Consequently, $v^k\nabla_k h_{ij}=\rho h_{ij}$ and similarly $\tilde{v}^k\nabla_{k} h_{ij}=\tilde{\rho} h_{ij}$. From the Coddazzi equation
\[\rho \lambda_n = \tilde{v}^i\tilde{v}^j v^k \nabla_{k} h_{ij}=\tilde{v}^i\tilde{v}^j v^k \nabla_i h_{kj}= \tilde{\rho} \tilde{v}^j v^k h_{kj}=0. \]
Therefore, $\rho=0$ and $v^k\nabla_k h_{ij}=0$ for all $v\in E$. In particular, $v^i v^j \nabla_k h_{ij}=0$ by the Codazzi equations. Since this holds for every unit vector $v \in E$, the claim follows. \\

We now continue with the proof of Proposition \ref{constant}. Since $\lambda_n$ is a spatial constant and $\beta G - \lambda_n = 0$, it follows that $G$ is also a spatial constant. From
\begin{align*}
\frac{\partial}{\partial t} h^i_{\ j} =\nabla^i\nabla_j G+Gh^i_{\ p}h^p_{\ j}
\end{align*}
the diagonalization of $h_{ij}$ is preserved under the flow and $\lambda_j$ satisfies an ODE
\[ \frac{\partial}{\partial t}\lambda_j=G\lambda_j^2. \]
Thus 
\begin{align*}
0&=\frac{\partial}{\partial t}(\beta G-\lambda_n)=\beta \frac{\partial G}{\partial \lambda_j}\lambda_j^2 G-G\lambda_n^2 \\
&=\lambda_n \Big ( \frac{\partial G}{\partial \lambda_j}\lambda_j (\lambda_j-\lambda_n) \Big ) \leq 0,
\end{align*} 
where in the last step we have used that the flow is weakly convex. Consequently, for each $j$, we either have $\lambda_j=0$ or $\lambda_n=n$.  In view of two-convexity, the only two possibilities are 
\[ \lambda_1=0, \, \lambda_2=\dots =\lambda_n, \, \beta=\frac{(n-1)(n+2)}{4}\]
or
\[\lambda_1=\lambda_2=\dots =\lambda_n, \, \beta=\frac{n(n-1)}{4}.\]
In each case, $\beta\leq \frac{(n-1)(n+2)}{4}$. 
\end{proof}

\begin{proposition}\label{cylinder}
Let $F:M^n\times [0,T)\to X^{n+1}$ be a one-parameter family of embedded, $\kappa$-two-convex hypersurfaces in $X^{n+1}$ moving with velocity $G_\kappa$. Then for any $\delta>0$ there is a constant $C_0$, depending only on $\delta,T,M_0,X$, such that $\lambda_n\leq (\frac{(n-1)(n+2)}{4}+\delta)G_\kappa+C_0$.
\end{proposition}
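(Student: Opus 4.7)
The approach is a contradiction argument: assume the estimate fails, blow up at the sequence of offending points, and apply Proposition \ref{constant} to the resulting limit flow in Euclidean space.

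Suppose for contradiction that the conclusion fails for some fixed $\delta>0$. By Theorem \ref{cylindrical}, $\lambda_n\leq H\leq CG_\kappa+C$, so the quantity
\[
\beta^* := \limsup_{G_\kappa(x,t)\to\infty}\frac{\lambda_n(x,t)}{G_\kappa(x,t)}
\]
is finite. If the estimate fails, then $\beta^*>\frac{(n-1)(n+2)}{4}+\delta$. Using a Hamilton-style point selection, define $\beta_k:=\sup\{\lambda_n/G_\kappa : G_\kappa\geq k\}$, which decreases to $\beta^*$, and choose $(x_k,t_k)$ with $\lambda_k:=G_\kappa(x_k,t_k)\geq k$ and $\lambda_n(x_k,t_k)/\lambda_k\geq\beta_k-1/k$.

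Next, carry out a parabolic blow-up at $(x_k,t_k)$: work in normal coordinates on $X$ around $F(x_k,t_k)$, dilate space by $\lambda_k$, and reparametrize time by $\lambda_k^2$. The rescaled flow then moves by $G_{\kappa/\lambda_k}$ in an ambient manifold with curvature tensor $O(\lambda_k^{-2})$, so as $\lambda_k\to\infty$ the limiting velocity is $G$ and the limiting ambient space is $\mathbb{R}^{n+1}$. Theorem \ref{gradient} provides uniform bounds on $|\nabla h|$ and $|\nabla^2 h|$, which under rescaling become uniform derivative bounds on the rescaled second fundamental form wherever the rescaled $\tilde G_k$ is of order one; combined with the evolution equation for $G_\kappa$, this confines $\tilde G_k$ to stay comparable to $1$ on a backward parabolic neighborhood of $(0,0)$ of uniform size, and yields $C^\infty$ subsequential convergence to a smooth flow $\tilde F_\infty:M_\infty^n\times(-\theta,0]\to\mathbb{R}^{n+1}$ moving with velocity $G$.

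Finally, verify the hypotheses of Proposition \ref{constant} for $\tilde F_\infty$ with $\beta=\beta^*$. The limit is weakly convex because Corollary \ref{convexity} applied with arbitrarily small $\delta'>0$ yields $\tilde\lambda_1\geq -\delta'\tilde G - K_1(\delta')/\lambda_k$ on the rescaled flows, and letting first $\lambda_k\to\infty$ and then $\delta'\to 0$ gives $\tilde\lambda_1\geq 0$ in the limit. The bound $\tilde h_{ij}\leq \beta^*\tilde G\,\tilde g_{ij}$ holds everywhere on $\tilde F_\infty$: every limit point arises from points where $G_\kappa\to\infty$, so $\lambda_n/G_\kappa\leq \beta_k\to\beta^*$ on the approximating sequence. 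Equality $\tilde\lambda_n=\beta^*\tilde G$ holds at the origin by the choice of $(x_k,t_k)$. Proposition \ref{constant} then forces $\beta^*\leq\frac{(n-1)(n+2)}{4}$, a contradiction. The main obstacle is the blow-up step itself, namely arranging uniform $C^\infty$ bounds on a backward parabolic neighborhood of uniform size; this is precisely what the pointwise gradient estimate of Theorem \ref{gradient} buys, and it is the reason the contradiction approach is available here in the first place.
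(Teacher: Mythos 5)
Your proposal is correct and follows essentially the same route as the paper: define $\beta^*$ (the paper writes it as $\inf_{K>0}\sup_{\{G_\kappa>K\}}\lambda_n/G_\kappa$, which is the same $\limsup$), pick a sequence realizing it, blow up using the pointwise derivative estimates of Theorem \ref{gradient}, and apply Proposition \ref{constant} to the Euclidean limit flow. You merely spell out more explicitly the verification that the limit is weakly convex (via Corollary \ref{convexity}) and that $\tilde h_{ij}\le\beta^*\tilde G\,\tilde g_{ij}$ holds on the whole limit, details the paper leaves implicit.
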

\begin{proof}
Define 
\[\beta=\inf_{K>0}\sup_{\{G_\kappa>K\}} \frac{\lambda_n}{G_\kappa}.\]
We claim that $\beta\leq \frac{(n-1)(n+2)}{4}$. To prove this, we use a blow-up argument. Take $(x_k,t_k)$ be a sequence of spacetime points such that $G_\kappa(x_k,t_k)\to \infty$ and $\lambda_n(x_k,t_k)/G_\kappa(x_k,t_k) \to \beta$. By Theorem \ref{gradient} the second fundamental form and its derivatives are uniformly bounded in a parabolic neighborhood $\mathcal{P}(x_k,t_k,\theta G_\kappa^{-1},\theta^2 G_\kappa(x_k,t_k)^{-2})$. By doing a parabolic dilation with center $(x_k,t_k)$ and scale $G_\kappa(x_k,t_k)$, we obtain a smooth limit flow in Euclidean space which moves with velocity $G$. This limit flow is defined on some time interval $(-\theta,0]$, where $\theta$ depends on the constant $C_\#$ in Theorem \ref{gradient}. Along the limit flow $\beta G g_{ij}-h_{ij}\geq 0$ and $\beta G-\lambda_n=0$ at $(0,0)$. By Proposition \ref{constant}, $\beta\leq \frac{(n-1)(n+2)}{4}$. Having established that $\beta \leq \frac{(n-1)(n+2)}{4}$, the assertion follows easily.
\end{proof}

\section{$L^p$ estimates for $\mu$}

In \cite[Proposition 4.1]{Brendle-Huisken} it is proved that
\begin{equation}
\frac{\partial \mu}{\partial t}\leq \frac{\partial G_\kappa}{\partial h_{kl}} (\nabla_k\nabla_l \mu+h^p_{\ k}h_{pl}\mu)-\sum_{i=1}^n\frac{1}{\mu-\lambda_i}\frac{\partial G_\kappa}{\partial \lambda_i}(\nabla_i \mu)^2+C\mu+C\sum_{i=1}^n\frac{1}{\mu-\lambda_i}
\end{equation}
in the viscosity sense in the domain $\{\mu>\lambda_n \}\cap \{\mu \geq 8 \, \text{\rm inj}(X)^{-1}\}$. For any $1>\sigma>0$ define the function
\begin{align*}
 f_\sigma&=G_\kappa^{\sigma-1} \Big ( \mu - \Big ( \frac{(n-1)(n+2)}{4}+\delta \Big ) G_\kappa \Big )-K_0\\
 f_{\sigma,+}&=\max \{f_\sigma ,0\},
\end{align*}
for some constant $K_0$ to be determined. By choosing $K_0$ large enough, depending $M_0$ and $C_0$ in Proposition \ref{cylinder}, one can arrange that $\mu-\lambda_n\geq \frac{\delta}{2} G_\kappa$ on the set $\{f_\sigma \geq 0\}$. In addition, by choosing $K_0$ large enough, depending on $\min_{M_0} G_\kappa$ and the injective radius of $X$, we can arrange that $\mu\geq 8 \, \text{\rm inj}(X)^{-1}$ on the set $\{f_\sigma \geq 0\}$. Finally, if we choose $K_0$ large enough, then $G_\kappa \geq G_\#$ on the set $\{f_\sigma \geq 0\}$. In the following, $C$ denotes a constant depending on $\delta, M_0, C_0$ in Proposition \ref{cylinder} and $C_\#$ in Theorem \ref{gradient} but not on $\sigma,p$. With the lower bound of $G_\kappa$, one has 
\begin{equation}
\frac{\partial \mu}{\partial t}\leq \frac{\partial G_\kappa}{\partial h_{kl}}(\nabla_k\nabla_l \mu+h^p_{\ k}h_{pl}\mu)-\sum_{i=1}^n\frac{1}{\mu-\lambda_i}\frac{\partial G_\kappa}{\partial \lambda_i}(\nabla_i \mu)^2+CG_\kappa
\end{equation} 
on the set $\{f_\sigma \geq 0\}$.

\begin{proposition}\label{Lp.bound}
There exists a small positive constant $c_0$, depending only on $\delta,T,M_0,X$, with the following significance. If $p \geq \frac{1}{c_0}$ and $\sigma \leq c_0 \, p^{-\frac{1}{2}}$, then we have 
\[\int_{M_t} f_{\sigma,+}^p \leq C,\] 
where $C$ is a positive constant that depends only on $p,\sigma,\delta,M_0, X$.
\end{proposition}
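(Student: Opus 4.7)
The plan is to derive a differential inequality
\[\frac{d}{dt}\int_{M_t} f_{\sigma,+}^p \leq C \int_{M_t} f_{\sigma,+}^p + C\]
and close with Gronwall's lemma. Throughout, write $\alpha := \frac{(n-1)(n+2)}{4}+\delta$ so that $f_\sigma + K_0 = G_\kappa^{\sigma-1}(\mu - \alpha G_\kappa)$. Combining the parabolic inequality for $\mu$ recalled immediately before the proposition with the evolution equation for $G_\kappa$ from Section~2, the chain rule produces on $\{f_\sigma \geq 0\}$ a viscosity inequality of the form
\begin{equation*}
\frac{\partial f_\sigma}{\partial t} \leq \frac{\partial G_\kappa}{\partial h_{kl}}\nabla_k\nabla_l f_\sigma + 2(\sigma-1)G_\kappa^{-1}\frac{\partial G_\kappa}{\partial h_{kl}}\nabla_k G_\kappa\,\nabla_l f_\sigma + E_\sigma,
\end{equation*}
where $E_\sigma$ collects the zero-order reaction, the quadratic gradient contributions in $|\nabla G_\kappa|^2$ and $\nabla G_\kappa\cdot\nabla\mu$ produced by the chain rule, and the damping contribution $-G_\kappa^{\sigma-1}\sum_i (\mu-\lambda_i)^{-1}(\partial G_\kappa/\partial\lambda_i)(\nabla_i\mu)^2$. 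Proposition~\ref{cylinder} gives $\mu - \lambda_i \geq \frac{\delta}{2}G_\kappa$ on this set, which turns the damping into a useful negative-definite gradient contribution.

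Next, multiply by $pf_{\sigma,+}^{p-1}$, integrate over $M_t$ against the evolving volume form, and integrate the diffusion term by parts. This produces the crucial good gradient
\begin{equation*}
-p(p-1)\int_{M_t} f_{\sigma,+}^{p-2}\,\frac{\partial G_\kappa}{\partial h_{kl}}\nabla_k f_\sigma\,\nabla_l f_\sigma,
\end{equation*}
which, by Corollary~\ref{convexity} and the $1$-homogeneity of $G_\kappa$, dominates $c\,p(p-1)\int f_{\sigma,+}^{p-2}G_\kappa^{-2}|\nabla f_\sigma|^2$ for a positive constant $c$. The cross-gradient term, after being tested against $pf_{\sigma,+}^{p-1}$, is split by Cauchy--Schwarz into a small multiple $\epsilon$ of the good gradient plus a term bounded by $C\epsilon^{-1}p\sigma^2\int f_{\sigma,+}^p G_\kappa^2$; here I use $|\nabla G_\kappa|\leq CG_\kappa^2$ from Theorem~\ref{gradient}. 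The hypothesis $\sigma\leq c_0 p^{-1/2}$ forces $p\sigma^2\leq c_0^2$, so choosing $c_0$ and $\epsilon$ sufficiently small absorbs this bad term into the good gradient. The remaining zero-order contributions in $E_\sigma$ are controlled by $C\int f_{\sigma,+}^p + C$ using $\mu\leq CG_\kappa$ (noncollapsing) on $\{f_\sigma\geq 0\}$, Young's inequality, and standard integral bounds on powers of $G_\kappa$.

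I expect the main obstacle to lie in the first step. Chain-rule differentiation of $G_\kappa^{\sigma-1}$ generates several quadratic gradient contributions (in $\nabla G_\kappa$, in $\nabla \mu$, and mixed), each of which must be tracked with its correct sign so that the damping term from the $\mu$-equation absorbs the ``bad'' $|\nabla\mu|^2$ piece. The quantitative balance $\sigma\sqrt{p}\leq c_0$ is dictated precisely by the requirement that the residual $|\nabla G_\kappa|^2$ contribution be swallowed by the $p(p-1)$-factor of the good gradient. Once this absorption is carried out, Gronwall delivers the $L^p$ bound on $[0,T)$ with a constant depending on $p,\sigma,\delta,M_0,X$, as claimed.
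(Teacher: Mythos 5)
The proposal is on the right track in its set-up (evolution inequality for $f_\sigma$, testing against $p f_{\sigma,+}^{p-1}$, absorption of the $\nabla G_\kappa\cdot\nabla f_\sigma$ cross term using $\sigma\sqrt{p}\leq c_0$), and in recognizing that the $(\mu-\lambda_i)^{-1}(\nabla_i\mu)^2$ damping term and the Cauchy--Schwarz absorption into the $p(p-1)$ good gradient are the engine of the argument. However, there is a genuine gap in the treatment of the zero-order reaction contribution.

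Differentiating $f_\sigma=G_\kappa^{\sigma-1}(\mu-\alpha G_\kappa)-K_0$ in time produces, besides the gradient terms you list, a reaction term of the form $\sigma\,\frac{\partial G_\kappa}{\partial h_{kl}}h^p_{\ k}h_{pl}(f_\sigma+K_0)$, and after testing this contributes $\sigma p\int_{M_t} G_\kappa^2\,f_{\sigma,+}^{p-1}(f_{\sigma,+}+K_0)$ to $\frac{d}{dt}\int f_{\sigma,+}^p$. You claim this is ``controlled by $C\int f_{\sigma,+}^p + C$ using $\mu\leq CG_\kappa$, Young, and standard integral bounds on powers of $G_\kappa$,'' but this cannot work: $G_\kappa$ is unbounded as the singular time is approached, so $\int_{M_t} G_\kappa^2 f_{\sigma,+}^p$ is not pointwise-in-time comparable to $\int_{M_t} f_{\sigma,+}^p$, and the smallness of $\sigma p$ alone does not help (the obstruction is in the $G_\kappa^2$ weight, not the numerical prefactor). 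This is precisely the place where the paper invokes its second main ingredient, the integral estimate for $\Delta\mu$ from \cite[Corollary 3.3]{Brendle2}: testing that inequality with $\eta=f_{\sigma,+}^p/G_\kappa$, using the coercivity $H\mu^2-|h|^2\mu-n^3(n\varepsilon\mu+K_1\mu^2)\gtrsim \delta\,H G_\kappa\mu$ (which comes from $|h|^2\leq H\mu-\tfrac{\delta}{4}HG_\kappa+C$, itself a consequence of the convexity estimate and $\mu-\lambda_n\geq\tfrac{\delta}{2}G_\kappa$), one obtains an upper bound for $\int G_\kappa^2 f_{\sigma,+}^p$ by gradient terms $\int G_\kappa^{-1}f_{\sigma,+}^{p-1}|\nabla f_{\sigma,+}||\nabla\mu|$, $\int G_\kappa^{-2}f_{\sigma,+}^p|\nabla\mu|^2$, plus $\int f_{\sigma,+}^p$ and $K_0^p\int G_\kappa^2$. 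The $\sigma p$ smallness is then spent absorbing those gradient terms into the good gradients coming from the evolution inequality, leaving only the genuinely inhomogeneous term $(\sigma p+1)K_0^p\int_{M_t}G_\kappa^2+(Cp)^p|M_t|$, which is integrable in time because $\int_0^T\int_{M_t}G_\kappa^2\,dt\leq C|M_0|$. Without the $\Delta\mu$ estimate the differential inequality you write down does not hold and Gronwall does not close.

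A secondary point: your ``good gradient'' after integration by parts is $c\,p(p-1)\int f_{\sigma,+}^{p-2}|\nabla f_{\sigma,+}|^2$ (weighted by $\partial G_\kappa/\partial h$, which is bounded above and below by dimensional constants), without a $G_\kappa^{-2}$ weight; the additional negative $G_\kappa^{-2}|\nabla\mu|^2$ term comes separately from the damping term in the $\mu$-equation. Both kinds of negative gradient terms are needed to absorb the contributions from the $\Delta\mu$ estimate, so keeping them distinct matters in the bookkeeping.
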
 

\begin{proof}
As in \cite{Brendle1}, $f_\sigma$ satisfies the evolution equation
\begin{align*}
\frac{\partial}{\partial t} f_\sigma &\leq \frac{\partial G_\kappa}{\partial h_{kl}}\nabla_k\nabla_l f_\sigma-G_\kappa^{\sigma-1}\sum_{i=1}^n\frac{1}{\mu-\lambda_i}\frac{\partial G_\kappa}{\partial \lambda_i}(\nabla_i \mu)^2 + CG_\kappa^\sigma \\ 
&+2(1-\sigma)G_\kappa^{-1}\frac{\partial G_\kappa}{\partial h_{kl}}\nabla_k G_\kappa\nabla_l f_\sigma+\sigma\frac{\partial G_\kappa}{\partial h_{kl}}h^p_{\ k}h_{pl}(f_\sigma+K_0) 
\end{align*} 
on the set $\{f_\sigma \geq 0\}$. We also note that $f_\sigma \leq C \, G_\kappa^\sigma$. Together with the curvature derivative estimate in Theorem \ref{gradient}, we obtain
\begin{align*}
\frac{d}{dt} \bigg (\int_{M_t}f^p_{\sigma,+} \bigg )&\leq -\frac{p^2}{C}\int_{M_t}f^{p-2}_{\sigma,+}|\nabla f_{\sigma,+} |^2-\frac{p}{C}\int_{M_t} G_\kappa^{-2} f_{\sigma,+}^p |\nabla\mu|^2 \\ 
&+ Cp \int_{M_t} G_\kappa^\sigma f_{\sigma,+}^{p-1} + Cp\int_{M_t} G_\kappa f_{\sigma,+}^{p-1} |\nabla f_{\sigma,+}| \\ 
&+C\sigma p\int_{M_t} G_\kappa^2f_{\sigma,+}^{p-1}(f_{\sigma,+}+K_0)\\
&\leq -\frac{p^2}{C}\int_{M_t}f^{p-2}_{\sigma,+}|\nabla f_{\sigma,+} |^2-\frac{p}{C}\int_{M_t}G_\kappa^{-2} f_{\sigma,+}^p |\nabla\mu|^2 \\ 
&+Cp \int_{M_t} G_\kappa^\sigma f_{\sigma,+}^{p-1}+C(\sigma p+1) \int_{M_t} G_\kappa^2f_{\sigma,+}^{p-1}(f_{\sigma,+}+K_0).
\end{align*}
Since $G_\kappa$ is uniformly bounded from below, we have the pointwise estimate 
\[p G_\kappa^\sigma f_{\sigma,+}^{p-1} - G_\kappa^2 f_{\sigma,+}^p \leq (Cp)^p \, G_\kappa^{2-(2-\sigma) \, p} \leq (Cp)^p.\] 
This gives 
\begin{align}\label{ev}
\begin{split}
\frac{d}{dt} \bigg (\int_{M_t}f^p_{\sigma,+} \bigg )
&\leq -\frac{p^2}{C}\int_{M_t}f^{p-2}_{\sigma,+}|\nabla f_{\sigma,+} |^2-\frac{p}{C}\int_{M_t} G_\kappa^{-2} f_{\sigma,+}^p |\nabla\mu|^2 \\ 
&+C(\sigma p+1) \int_{M_t} G_\kappa^2f_{\sigma,+}^{p-1}(f_{\sigma,+}+K_0) + (Cp)^p \, |M_t|.
\end{split}
\end{align} 
At this point, we apply Corollary 3.3 in \cite{Brendle2}. This inequality is independent of any evolution equation, and only uses the convexity estimate in Corollary \ref{convexity}. This implies
\begin{align*}
0&\leq -\int_{M_t} \langle \nabla\eta,\nabla\mu \rangle+\frac{1}{2}\int_{M_t}\eta (|h|^2\mu-H\mu^2+n^3(n\varepsilon\mu+K_1(\varepsilon)\mu^2))\\
&+\int_{M_t}\eta\sum_{i=1}^n \frac{1}{\mu-\lambda_i}( |\nabla_i\mu |+C) |\nabla_i H |\\
&+\int_{M_t}\eta (H+n^3(n\varepsilon\mu+K_1(\varepsilon))) \sum_{i=1}^n\frac{1}{(\mu-\lambda_i)^2}((\nabla_i\mu)^2+C) \\ 
&+ C \int_{M_t} \eta \mu + C \int_{M_t} \eta \sum_{i=1}^n \frac{1}{\mu-\lambda_i}
\end{align*}
for any nonnegative test function $\eta$ with support contained in $\{\mu>\lambda_n\}\cap \{ \mu\geq 8 \, \text{\rm inj}(X)^{-1} \}$. Taking $\eta=\frac{f_{\sigma,+}^p}{G_\kappa}$ as the test function yields
\begin{align}\label{inhomo}
\begin{split}
&\frac{1}{2}\int_{M_t} (H\mu^2-|h|^2\mu-n^3 (n\varepsilon\mu+K_1(\varepsilon))\mu^2) \frac{f_{\sigma,+}^p}{G_\kappa}\\
&\leq -\int_{M_t} \langle \nabla  (\frac{f_{\sigma,+}^p}{G_\kappa}),\nabla\mu \rangle \\
&+\int_{M_t}\sum_{i=1}^n\frac{1} {\mu-\lambda_i} (|\nabla_i\mu|+C)|\nabla_i H|\frac{f_{\sigma,+}^p}{G_\kappa}\\
&+\frac{1}{2}\int_{M_t} (H+n^3 (n\varepsilon\mu +K_1(\varepsilon)))\sum_{i=1}^n\frac{1}{(\mu-\lambda_i)^2} \, ((\nabla_i\mu)^2+C) \, \frac{f_{\sigma,+}^p}{G_\kappa} \\ 
&+ C \int_{M_t} f_{\sigma,+}^p+C \int_{M_t}\frac{f_{\sigma,+}^p}{G_\kappa}\sum_{i=1}^n\frac{1}{\mu-\lambda_i}.
\end{split}
\end{align}
Using the fact that $\mu\geq \lambda_n+\frac{\delta}{2}G_\kappa$ on the set $\{f_\sigma \geq 0\}$ and the curvature derivative estimate, the right hand side of (\ref{inhomo}) can be estimated by
\begin{align*}
RHS&\leq Cp\int_{M_t} G_\kappa^{-1}f_{\sigma,+}^{p-1}|\nabla f_{\sigma,+}||\nabla \mu|+C\int_{M_t} f_{\sigma,+}^p|\nabla\mu|\\
&+C\int_M G_\kappa^{-2}f_{\sigma,+}^p|\nabla\mu|^2+C\int_{M_t} f_{\sigma,+}^p.
\end{align*}
Using Corollary \ref{convexity} together with the fact that $\mu-\lambda_n \geq \frac{\delta}{2} G_\kappa$ on the set $\{f_\sigma \geq 0\}$, we obtain  
\begin{align*}
|h|^2 &\leq \sum_{i=2}^n\lambda_i\lambda_n+\lambda_1^2\\ 
&=H\lambda_n+\lambda_1(\lambda_1-\lambda_n)\\
&\leq H\mu-\frac{\delta}{4}HG_\kappa+C.
\end{align*}
Thus, by choosing $\varepsilon$ small enough, we obtain
\[H\mu^2-|h|^2\mu-n^3(n\varepsilon\mu+K_1(\varepsilon))\mu^2 \geq \frac{\delta}{8}HG_\kappa\mu-CG_\kappa.\]
Hence, the left hand side of  (\ref{inhomo}) can be estimated by  
\begin{align*}
LHS\geq \int_M\frac{\delta}{8}H\mu f_{\sigma,+}^p-C \int_M f_{\sigma,+}^p.
\end{align*}
Putting these facts together gives 
\begin{align*} 
\int_{M_t} G_\kappa^2 f_{\sigma,+}^p 
&\leq Cp\int_{M_t} G_\kappa^{-1}f_{\sigma,+}^{p-1}|\nabla f_{\sigma,+}||\nabla \mu|+C\int_{M_t} f_{\sigma,+}^p|\nabla\mu|\\
&+C\int_M G_\kappa^{-2}f_{\sigma,+}^p|\nabla\mu|^2+C\int_{M_t} f_{\sigma,+}^p.
\end{align*} 
Using Young's inequality, we can absorb the term $\int_{M_t} f_{\sigma,+}^p|\nabla\mu|$ into the left hand side. This gives
\begin{align*} 
\int_{M_t} G_\kappa^2 f_{\sigma,+}^p 
&\leq Cp\int_{M_t} G_\kappa^{-1}f_{\sigma,+}^{p-1}|\nabla f_{\sigma,+}||\nabla \mu|+C\int_M G_\kappa^{-2}f_{\sigma,+}^p|\nabla\mu|^2 \\ 
&+C\int_{M_t} f_{\sigma,+}^p.
\end{align*} 
Combining this with the pointwise inequality $f_{\sigma,+}^{p-1}(f_{\sigma,+}^p+K_0)\leq 2f_{\sigma,+}^p+K_0^p$, we deduce that
\begin{align}\label{ax}
\begin{split}
&\int_{M_t}G_\kappa^2 f_{\sigma,+}^{p-1}(f_{\sigma,+}+K_0) \\ 
&\leq Cp\int_{M_t} G_\kappa^{-1}f_{\sigma,+}^{p-1}|\nabla f_{\sigma,+}||\nabla \mu|+C\int_M G_\kappa^{-2}f_{\sigma,+}^p|\nabla\mu|^2 \\ 
&+C\int_{M_t} f_{\sigma,+}^p + K_0^p \int_{M_t} G_\kappa^2. 
\end{split}
\end{align}
From (\ref{ev}), (\ref{ax}), we conclude that if $p\geq \frac{1}{c_0}$ and $\sigma\leq c_0 p^{-\frac{1}{2}}$ then
\begin{align*}
&\frac{d}{dt} \bigg ( \int_{M_t}f_{\sigma,+}^p \bigg ) \\
&\leq -\frac{p^2}{C}\int_{M_t}f_{\sigma,+}^{p-2}|\nabla f_{\sigma,+}|^2-\frac{p}{C}\int_{M_t} G_\kappa^{-2} f_{\sigma,+}^p |\nabla\mu|^2\\
&+C(\sigma p^2+p) \int_{M_t}G_\kappa^{-1}f_{\sigma,+}^{p-1}|\nabla f_{\sigma,+}||\nabla \mu|+C(\sigma p+1)\int_{M_t}G_\kappa^{-2}f_{\sigma,+}^p|\nabla \mu |^2\\
&+ C(\sigma p+1)\int_{M_t}f_{\sigma,+}^p+(\sigma p+1)K_0^p \int_{M_t}G_\kappa^2 + (Cp)^p \, |M_t| \\ 
&\leq C(\sigma p+1)\int_{M_t}f_{\sigma,+}^p+(\sigma p+1)K_0^p \int_{M_t}G_\kappa^2 + (Cp)^p \, |M_t|. 
\end{align*}
Since $\int_0^T \int_{M_t} G_\kappa^2 \leq C \int_0^T \int_{M_t} H^2 \leq C \, |M_0|$, the assertion follows.
\end{proof}

The following result is a direct consequence of the proof of Proposition \ref{Lp.bound}:

\begin{proposition}\label{fin}
Let $f_{\sigma,k}=G_\kappa^{\sigma-1} \Big ( \mu-(\frac{(n-1)(n+2)}{4}+\delta)G_\kappa \Big )-k$ and $f_{\sigma,k,+}=\max \{ f_{\sigma,k},0\}$. Then for $k\geq K_0$, $p\geq \frac{1}{c_0}$  and $ \sigma\leq c_0 p^{-\frac{1}{2}}$ we have
\begin{align*} 
\frac{d}{dt} \bigg ( \int_{M_t}f_{\sigma,k,+}^p \bigg ) 
&\leq -\frac{p^2}{C}\int_{M_t}f_{\sigma,k,+}^{p-2}|\nabla f_{\sigma,k}|^2 \\ 
&+C(\sigma p+1) \int_{M_t}G_\kappa^2f_{\sigma,k,+}^{p-1}(f_{\sigma,k}+k) + (Cp)^p \, |M_t\cap \{f_{\sigma,k}\geq 0\}|. 
\end{align*}
\end{proposition}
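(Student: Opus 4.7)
The plan is to follow the proof of Proposition \ref{Lp.bound} almost verbatim, replacing the shift constant $K_0$ by the free parameter $k$, and stopping the argument one step earlier, before Corollary 3.3 of \cite{Brendle2} is invoked. The starting observation is that since $k \geq K_0$, one has $f_{\sigma,k} \leq f_\sigma$ pointwise, so $\{f_{\sigma,k} \geq 0\} \subseteq \{f_\sigma \geq 0\}$. Hence every pointwise fact that held on the old support (namely $\mu - \lambda_n \geq \frac{\delta}{2} G_\kappa$, $\mu \geq 8\,\text{\rm inj}(X)^{-1}$, and $G_\kappa \geq G_\#$) is automatically available on the new support.

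Next I would derive the evolution inequality for $f_{\sigma,k}$ exactly as for $f_\sigma$. Since $f_{\sigma,k}$ differs from $f_\sigma$ by the time-independent constant $K_0 - k$, and since $f_\sigma + K_0 = f_{\sigma,k} + k$ equals the raw unshifted quantity $G_\kappa^{\sigma-1}(\mu - (\frac{(n-1)(n+2)}{4} + \delta) G_\kappa)$, the parabolic inequality for $f_{\sigma,k}$ has exactly the same structure as the one displayed for $f_\sigma$, with $K_0$ replaced by $k$ in the zeroth-order term $\sigma \frac{\partial G_\kappa}{\partial h_{kl}} h^p_{\ k} h_{pl} (f_\sigma + K_0)$. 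Multiplying by $p f_{\sigma,k,+}^{p-1}$, integrating over $M_t$, and using Theorem \ref{gradient} to bound $|\nabla G_\kappa|$ and $|h|^2$ in terms of $G_\kappa^2$ on the support, one obtains the exact analogue of (\ref{ev}): the good negative term $-\frac{p^2}{C}\int_{M_t} f_{\sigma,k,+}^{p-2}|\nabla f_{\sigma,k}|^2$, a further negative gradient term $-\frac{p}{C}\int_{M_t} G_\kappa^{-2} f_{\sigma,k,+}^p |\nabla\mu|^2$ which I simply discard, a subhomogeneous term $Cp\int_{M_t} G_\kappa^\sigma f_{\sigma,k,+}^{p-1}$, and the zero-order contribution $C(\sigma p + 1)\int_{M_t} G_\kappa^2 f_{\sigma,k,+}^{p-1}(f_{\sigma,k} + k)$.

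The only point that needs attention is the treatment of $Cp\int_{M_t} G_\kappa^\sigma f_{\sigma,k,+}^{p-1}$. I would apply the same pointwise estimate as in Proposition \ref{Lp.bound}, namely $p G_\kappa^\sigma f_{\sigma,k,+}^{p-1} - G_\kappa^2 f_{\sigma,k,+}^p \leq (Cp)^p G_\kappa^{2-(2-\sigma)p} \leq (Cp)^p$, which is valid thanks to $G_\kappa \geq G_\#$. The key improvement here is that the left-hand side of this inequality is supported in $\{f_{\sigma,k} \geq 0\}$ (for $p \geq 1$), so when I integrate I pick up only $(Cp)^p |M_t \cap \{f_{\sigma,k} \geq 0\}|$ rather than $(Cp)^p |M_t|$; this is exactly the sharper measure appearing in the statement. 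The residual $G_\kappa^2 f_{\sigma,k,+}^p$ generated by this manipulation is then absorbed into $C(\sigma p + 1)\int_{M_t} G_\kappa^2 f_{\sigma,k,+}^{p-1}(f_{\sigma,k} + k)$, since $f_{\sigma,k,+} \leq f_{\sigma,k,+} + k$.

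In contrast to the proof of Proposition \ref{Lp.bound}, I deliberately do \emph{not} invoke Corollary 3.3 of \cite{Brendle2} to eliminate the $\int_{M_t} G_\kappa^2 f_{\sigma,k,+}^{p-1}(f_{\sigma,k}+k)$ term; keeping it in the final inequality is precisely the content of the proposition and is what makes the estimate usable for the Stampacchia iteration that should follow. There is no real obstacle here: the statement is a bookkeeping exercise in which the fixed constant $K_0$ is promoted to a free level $k \geq K_0$ and the proof of Proposition \ref{Lp.bound} is simply truncated one step earlier.
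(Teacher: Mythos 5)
Your proposal is correct and matches the paper's intent exactly: the paper states Proposition \ref{fin} is ``a direct consequence of the proof of Proposition \ref{Lp.bound},'' and you have spelled out precisely what that means—rerun the derivation of (\ref{ev}) with $K_0$ replaced by $k\geq K_0$ (so $\{f_{\sigma,k}\geq 0\}\subseteq\{f_\sigma\geq 0\}$ and all pointwise facts persist), and stop before Corollary 3.3 of \cite{Brendle2} is invoked. You also correctly isolate the one genuinely new observation needed, namely that the pointwise bound $pG_\kappa^\sigma f_{\sigma,k,+}^{p-1}-G_\kappa^2 f_{\sigma,k,+}^p\leq (Cp)^p$ vanishes identically off $\{f_{\sigma,k}\geq 0\}$, which upon integration yields the sharper $|M_t\cap\{f_{\sigma,k}\geq 0\}|$ in place of $|M_t|$.
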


\section{Stampacchia iteration}
From now on we fix $p$ and $\sigma$ such that $p \geq \frac{1}{c_0}$ and $\sigma+2p^{-1} \leq c_0 \, (2np)^{-\frac{1}{2}}$. In the following, $C$ will denote a constant that may depend on $p$ and $\sigma$. By Proposition \ref{Lp.bound}, we have 
\[ \int_0^T\int_{M_t} (f_{\sigma,0,+}^{2np}+f_{\sigma+2p^{-1},0,+}^{2np}) dt\leq C. \]
Let
\[ A(k)=\int_0^T\int_{M_t}1_{\{f_{\sigma,k}\geq 0 \}}dt. \] 
For any $k$ sufficiently large, Proposition \ref{fin} implies
\begin{align*}
\sup_{t\in [0,T)} \int_{M_t}f_{\sigma,k,+}^p &\leq C\int_0^T\int_{M_t}G_\kappa^2f_{\sigma,k,+}^{p-1}(f_{\sigma,k,+}+k) dt + CA(k) \\
\int_0^T\int_{M_t}f_{\sigma,k,+}^{p-2}|\nabla f_{\sigma,k}|^2 dt &\leq C\int_0^T\int_{M_t}G_\kappa^2f_{\sigma,k,+}^{p-1}(f_{\sigma,k,+}+k) dt + CA(k).
\end{align*}
Applying the  Michael-Simon Sobolev inequality (cf. \cite{Michael-Simon}) to $f_{\sigma,k,+}^p$, we obtain 
\[\bigg ( \int_{M_t}f_{\sigma,k,+}^{\frac{pn}{n-1}} \bigg )^{\frac{n-1}{n}} \leq C \int_{M_t} (f_{\sigma,k,+}^{p-1}|\nabla f_{\sigma,k}|+G_\kappa f_{\sigma,k,+}^p),\] 
hence 
\begin{align*}
\int_0^T \bigg ( \int_{M_t}f_{\sigma,k,+}^{\frac{pn}{n-1}} \bigg )^{\frac{n-1}{n}}dt &\leq C\int_0^T \int_{M_t} (f_{\sigma,k,+}^{p-1}|\nabla f_{\sigma,k}|+G_\kappa f_{\sigma,k,+}^p) dt\\
&\leq C \int_0^T \int_{M_t} (f_{\sigma,k,+}^{p-2}|\nabla f_{\sigma,k}|^2+(G_\kappa^2+1) f_{\sigma,k,+}^p) dt\\
&\leq C  \int_0^T \int_{M_t} (G_\kappa^2+1) f_{\sigma,k,+}^{p-1} (f_{\sigma,k}+k) dt + CA(k).
\end{align*}
H\"older's inequality gives 
\[\int_{M_t} f_{\sigma,k,+}^{\frac{p(n+1)}{n}} \leq \bigg ( \int_{M_t}f_{\sigma,k,+}^{p} \bigg )^{\frac{1}{n}} \bigg ( \int_{M_t}f_{\sigma,k,+}^{\frac{pn}{n-1}} \bigg )^{\frac{n-1}{n}},\] 
hence 
\begin{align*} 
\int_0^T\int_{M_t} f_{\sigma,k,+}^{\frac{p(n+1)}{n}} dt 
&\leq \bigg ( \sup_{t\in [0,T)} \int_{M_t}f_{\sigma,k,+}^{p} \bigg )^{\frac{1}{n}} \int_0^T \bigg ( \int_{M_t}f_{\sigma,k,+}^{\frac{pn}{n-1}} \bigg )^{\frac{n-1}{n}}dt\\
&\leq \bigg ( C \int_0^T \int_{M_t} (G_\kappa^2+1) f_{\sigma,k,+}^{p-1} (f_{\sigma,k}+k) dt + C A(k) \bigg )^{\frac{n+1}{n}}.
\end{align*} 
Therefore, 
\begin{align*}
&\bigg ( \int_0^T\int_{M_t} f_{\sigma,k,+}^{\frac{p(n+1)}{n}} dt \bigg )^{\frac{n}{n+1}} \\ 
&\leq C \int_0^T \int_{M_t} (G_\kappa^2+1) f_{\sigma,0,+}^{p} 1_{\{f_{\sigma,k} \geq 0\}} dt + C A(k) \\
&\leq C \bigg ( \int_0^T \int_{M_t} (G_\kappa^{4n}+1) f_{\sigma,0,+}^{2np} dt \bigg )^{\frac{1}{2n}} \bigg ( \int_0^T \int_{M_t} 1_{\{f_{\sigma,k} \geq 0\}} \bigg )^{\frac{2n-1}{2n}}+ C A(k)\\
&\leq C \bigg ( \int_0^T \int_{M_t} (f_{\sigma,0,+}^{2np}+f_{\sigma+2p^{-1},0,+}^{2np}) dt \bigg )^{\frac{1}{2n}} A(k)^{\frac{2n-1}{2n}}+ C A(k)\\
&\leq C A(k)^{\frac{2n-1}{2n}}.
\end{align*}
Consequently,
\[ (\tilde{k}-k)^{\frac{p(n+1)}{n}}A(\tilde{k})\leq C A(k)^{\frac{2n^2+2n-1}{2n^2}}.\] 
Iterating this inequality gives $A(k)=0$ for $k$ large enough (cf. \cite[Lemma 4.1]{Stampacchia}).

\appendix 

\section{Appendix}

In this section we show that strong maximum principle holds for $u_{ij}=\beta G g_{ij}-h_{ij}$.
\begin{proposition}
Let $\Omega$  be a bounded domain with smooth boundary. Assume $u_{ij}$ is non-negative in $\Omega\times [0,T]$ and $\lambda_1 (u)=0$ at a point $(x_0,T)$, then $\lambda_1 (u)=0$ in $\Omega\times [0,T]$
\end{proposition}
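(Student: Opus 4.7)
The plan is to reduce the tensor strong maximum principle to the scalar one by contracting $u$ with an extension of a null eigenvector of $u$ at $(x_0,T)$. Let $v_0\in T_{x_0}\Omega$ be a unit null eigenvector of $u_{ij}(x_0,T)$ and extend it to a smooth vector field $V$ on $\bar\Omega$, for instance by parallel transport along geodesics emanating from $x_0$ in the background metric, so that $\nabla V(x_0)=0$. Define the scalar function
\[ w(x,t) := u_{ij}(x,t)\,V^i(x)V^j(x).\]
Since $u_{ij}\geq 0$ we have $w\geq \lambda_1(u)\,|V|^2\geq 0$, and $w(x_0,T)=0$ by the choice of $v_0$. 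If we can show $w\equiv 0$ on $\Omega\times[0,T]$, then $v_0$ lies in the null space of $u_{ij}$ at every point, so $\lambda_1(u)\leq 0$ everywhere, and combined with the hypothesis $u\geq 0$ this gives $\lambda_1(u)\equiv 0$.

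To show $w\equiv 0$, I would derive a linear parabolic supersolution inequality for $w$. Starting from
\[ \frac{\partial u^i{}_j}{\partial t} = \frac{\partial G}{\partial h_{kl}}\nabla_k\nabla_l u^i{}_j + \frac{\partial G}{\partial h_{kl}}h^p{}_k h_{pl}\,u^i{}_j - \frac{\partial^2 G}{\partial h_{kl}\partial h_{pq}}\nabla^i h_{kl}\,\nabla_j h_{pq}\]
recalled in the main text, contraction with $V_iV^j$ yields two reaction terms, both nonnegative: the first equals $\frac{\partial G}{\partial h_{kl}}h^p{}_kh_{pl}\,w\geq 0$ (since $\frac{\partial G}{\partial h_{kl}}$ is positive semidefinite on weakly convex hypersurfaces and $w\geq 0$), while the second equals $-\frac{\partial^2 G}{\partial h_{kl}\partial h_{pq}}B_{kl}B_{pq}\geq 0$ with $B_{kl}=V^i\nabla_ih_{kl}$, by concavity of $G$. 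The remaining contributions come from the commutator
\[ \tfrac{\partial G}{\partial h_{kl}}\nabla_k\nabla_l\big(u_{ij}V^iV^j\big) - \Big(\tfrac{\partial G}{\partial h_{kl}}\nabla_k\nabla_l u_{ij}\Big)V^iV^j,\]
which involves $\nabla V$ and $\nabla^2 V$ coupled to $u$ and $\nabla u$; these are lower-order corrections with bounded coefficients. After applying Cauchy--Schwarz to absorb the $|\nabla u|$-terms into the positive diffusion, at the cost of enlarging a zero-order coefficient, I would arrive at an inequality
\[ \frac{\partial w}{\partial t} \geq \tfrac{\partial G}{\partial h_{kl}}\nabla_k\nabla_l w + A^k\nabla_kw + Bw \]
with $A,B$ bounded on $\Omega\times[0,T]$. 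The operator $\frac{\partial G}{\partial h_{kl}}\nabla_k\nabla_l$ is uniformly elliptic there because $\frac{\partial G}{\partial \lambda_i}>0$ and all curvatures are smooth and bounded on $[0,T]$.

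Since $w\geq 0$ attains the value $0$ at the parabolic-interior point $(x_0,T)$, the standard strong minimum principle for scalar uniformly parabolic supersolutions (equivalently, the strong maximum principle applied to $-w$ as a subsolution) forces $w\equiv 0$ on $\Omega\times[0,T]$, completing the proof. The main obstacle is the commutator step, since a priori $|\nabla u|$ is not controlled by $|\nabla w|$; the cleanest remedy is the Young's inequality absorption described above, but one could equivalently follow Hamilton's invariant approach and propagate $V$ as a smooth section of the null eigenbundle of $u$ rather than freezing it to be $t$-independent.
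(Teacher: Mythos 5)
Your reduction to a scalar maximum principle via $w(x,t)=u_{ij}(x,t)V^i(x)V^j(x)$ has a genuine gap at the commutator step, and the remedy you propose does not repair it. When you expand
\[
\frac{\partial G}{\partial h_{kl}}\nabla_k\nabla_l w
= \frac{\partial G}{\partial h_{kl}}\bigl(\nabla_k\nabla_l u_{ij}\bigr)V^iV^j
+ 4\,\frac{\partial G}{\partial h_{kl}}\bigl(\nabla_l u_{ij}\bigr)\bigl(\nabla_k V^i\bigr)V^j
+ \cdots,
\]
the cross term $\frac{\partial G}{\partial h_{kl}}(\nabla_l u_{ij})(\nabla_k V^i)V^j$ involves $\nabla u$ paired against $\nabla V\otimes V$, which is \emph{not} a linear combination of $\nabla w$ and $u$ (the identity $\nabla_l w = (\nabla_l u_{ij})V^iV^j + 2u_{ij}(\nabla_l V^i)V^j$ only controls the contraction of $\nabla u$ with $V\otimes V$). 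Your choice $\nabla V(x_0)=0$ kills this term at the single point $x_0$ but nowhere else in $\Omega$. Moreover, ``absorbing $|\nabla u|$ into the positive diffusion'' has no meaning here: $\frac{\partial G}{\partial h_{kl}}\nabla_k\nabla_l$ is a linear second-order operator in a supersolution inequality, not a Dirichlet form, so there is no coercive $-|\nabla w|^2$ (or $-|\nabla u|^2$) term to absorb anything into. The best you can say a priori is that this cross term is bounded by a constant $M$ on $\Omega\times[0,T]$. That leaves you with an inequality of the form $\partial_t w \ge \frac{\partial G}{\partial h_{kl}}\nabla_k\nabla_l w + A^k\nabla_k w + Bw - M$, and the inhomogeneous $-M$ destroys the conclusion: for instance $w = M(T-t)$ is nonnegative, vanishes at $(x_0,T)$, and satisfies such an inequality without being identically zero. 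Similarly, if you work with $u_{ij}$ (lower indices) you also pick up metric-evolution terms $-2G h_{ik}u^k{}_j V^iV^j$ that are not obviously of the form $Bw$; with either index placement the obstruction is the same.

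The paper avoids this difficulty entirely by a barrier construction rather than a direct contraction. One assumes for contradiction that $\lambda_1(u)>0$ somewhere at $t=0$, picks a smooth nonnegative $f_0\le\lambda_1(u(\cdot,0))$ with $f_0>0$ at that point and $f_0=0$ on $\partial\Omega$, and solves the clean \emph{scalar} linear parabolic equation $\partial_t f = \frac{\partial G}{\partial h_{kl}}\nabla_k\nabla_l f - Af$ for $f$, where $A$ is large enough that $A+\frac{\partial G}{\partial h_{kl}}h^p{}_kh_{pl}\ge 0$. The scalar strong maximum principle is applied to $f$ itself (for which there is no commutator issue), giving $f>0$ in the interior for $t>0$. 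One then shows $\tilde u_{ij}=u_{ij}-fg_{ij}$ satisfies the hypotheses of Hamilton's \emph{weak} tensor maximum principle and stays nonnegative, hence $\lambda_1(u(x_0,T))\ge f(x_0,T)>0$, a contradiction. This decoupling --- strong scalar principle for the barrier $f$, weak tensor principle for $u-fg$ --- is exactly what sidesteps the uncontrolled $\nabla u\cdot\nabla V$ coupling. Your closing suggestion to ``propagate $V$ as a smooth section of the null eigenbundle'' would also require a nontrivial argument (the null eigenbundle need not even have locally constant rank), and is not fleshed out; the barrier route is the clean fix.
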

\begin{proof}
It suffices to prove $\lambda_1(u)=0$ at time $t=0$. We argue by contradiction. Suppose that $\lambda_1(u(y,0))>0$ at some point $y \in \Omega$. We can find a non-negative smooth function $f_0$ such that $f_0(x) \leq \lambda_1(u(x,0))$ for each point $x \in \Omega$, $f(y) \geq \frac{1}{2} \, \lambda_1(u(y,0))$, and $f(x) = 0$ for $x \in \partial \Omega$. Let us choose a large positive constant $A$ such that $A+\frac{\partial G}{\partial h_{kl}}h_{\ k}^ph_{pl}\geq 0$. Let $f$ be the solution of the equation 
\[\frac{\partial f}{\partial t}=\frac{\partial G}{\partial h_{kl}}\nabla_k\nabla_l f-Af\] 
with initial condition $f(x,0)=f_0(x)$ for $x \in \Omega$ and boundary condition $f(x,t) = 0$ for $x \in \partial \Omega$. By the strong maximum principle for scalar functions, $f>0$ in $\Omega\times (0,T]$. Since $u_{ij}$ satisfies the inequality 
\[\frac{\partial}{\partial t}u^i_{\ j} \geq \frac{\partial G}{\partial h_{kl}}\nabla_k\nabla_l u^i_{\ j}+\frac{\partial G}{\partial h_{kl}}h_{\ k}^p h_{pl}u^i_{\ j},\] 
the tensor $\tilde{u}_{ij} = u_{ij} - f g_{ij}$ satisfies  
\begin{align*}
\frac{\partial}{\partial t} \tilde{u}^i_{\ j}
&\geq \frac{\partial G}{\partial h_{kl}}\nabla_k\nabla_l \tilde{u}^i_{\ j}+\frac{\partial G}{\partial h_{kl}}h_{\ k}^p h_{pl}\tilde{u}^i_{\ j} \\
&+ \Big ( \frac{\partial G}{\partial h_{kl}}h_{\ k}^ph_{pl}+A \Big ) f \delta^i_{\ j} \\
&\geq \frac{\partial G}{\partial h_{kl}}\nabla_k\nabla_l \tilde{u}^i_{\ j}+\frac{\partial G}{\partial h_{kl}}h_{\ k}^p h_{pl}\tilde{u}^i_{\ j}.
\end{align*}
By the weak maximum principle for tensors (cf. \cite[Theorem 9.1]{Hamilton1}), $\tilde{u}\geq 0$ in $\Omega\times [0,T]$. Thus $\lambda_1(u(x_0,T)) \geq f(x_0,T)>0$ which contradicts the assumption.
\end{proof}

\end{document}